\documentclass[reqno,a4paper,11pt,english]{amsart}

\usepackage[utf8]{inputenc}
\usepackage{lmodern}
\usepackage{tikz,tkz-tab}
\usepackage{graphicx}
\usepackage{amsmath,amssymb,amsthm,bbm}
\usepackage{comment}
\usepackage{verbatim}
\usepackage[]{algorithm}
\usepackage{algpseudocodex}
\usepackage{ulem}

\usepackage{enumitem}

\newcommand{\N}{\mathbb N}

\newcommand{\Z}{\mathbb{Z}}

\newcommand{\Zd}{\mathbb{Z}^d}

\renewcommand{\P}{\mathbb{P}}
\newcommand{\E}{\mathbb{E}}

\newcommand{\Ed}{\mathbb{E}^d}

\renewcommand{\epsilon}{\varepsilon}
\renewcommand{\phi}{\varphi}

\renewcommand{\liminf}{\underline{\lim}}

\newcommand{\regine}[1]{\marginpar{}} %{\footnotesize R: #1}
\newcommand{\aurelia}[1]{\marginpar{}} %{\footnotesize A: #1}

\newtheorem{theorem}{Theorem}[section]

\newtheorem{conjecture-dsk}[theorem]{Conjecture}
\newtheorem{lemma}[theorem]{Lemma}
\newtheorem{defi}[theorem]{Definition}

\begin{document}

\title[CPDE]{The Contact Process can survive on a slightly subcritical dynamical percolation cluster}

{
\author{Aurelia Deshayes}
\address{Univ Paris Est Creteil, Univ Gustave Eiffel, CNRS, LAMA UMR8050, F-94010 Creteil, France and IRL CNRS IFUMI-2030, Montevideo, Uruguay.}
\email{aurelia.deshayes@u-pec.fr}
\author{R{\'e}gine Marchand}
\address{%Institut \'Elie Cartan Lorraine (math{\'e}matiques)\\
%Universit{\'e} de Lorraine\\
%Campus Scientifique, BP 239 \\
%54506 Vandoeuvre-l{\`e}s-Nancy  Cedex France\\
Universit\'e de Lorraine, Institut \'Elie Cartan de Lorraine, UMR 7502, Vandoeuvre-l{\`e}s-Nancy, F-54506, France\\
\and \\
CNRS, Institut \'Elie Cartan de Lorraine, UMR 7502, Vandoeuvre-l{\`e}s-Nancy, F-54506, France\\
}
\email{Regine.Marchand@univ-lorraine.fr}
}

\begin{abstract}
The contact process on dynamic edges (CPDE) is a contact process evolving on a dynamic environment given by a dynamical percolation on the edges of $\mathbb Z^d$: each edge updates its state to open or closed with respective rates $vp$ and $v(1-p)$. By coupling a well-chosen subset of once infected sites in the CPDE with a cluster of some supercritical percolation on the edges of $\Zd$, we prove that, for every dimension $d \ge 2$, we can find some slightly subcritical $p<p_c(d)$ such that for every update speed $v>0$, the contact process with large enough infection rate can survive. This extends the result for dimension~1 proved by Linker and Remenik in~\cite{linker2020contact}.
\end{abstract}

\date{\today}

\def\motsclefs{Contact Process, Dynamic Environment, Percolation, Enhancement}

\subjclass[2000]{60K35, 82B43.}
\keywords{\motsclefs}

\maketitle

\section{Introduction}
The standard contact process is a continuous time Markov process introduced by Harris in 1974~\cite{harris1974contact}. It is a fundamental interacting particle system which can be used to model the spread of an infection on a network. Each vertex of a given graph is either infected or healthy and the dynamics are the following ones: infected vertices recover at rate 1 while healthy vertices become infected at rate $\lambda$ times their number of infected nearest neighbors, where $\lambda>0$ is the infection parameter. A key feature of this process is that it exhibits a phase transition on infinite transitive graphs: starting from a single infected vertex, the infection dies out almost surely for small $\lambda$ while it survives with positive probability for large $\lambda$. 

The contact process on a dynamic graph is a natural extension of the classical contact process in which the underlying network evolves over time (see~\cite{valesin_cours} for a recent overview). In addition to the usual infection and recovery mechanisms for vertices, the edges are updated according to an independent stochastic dynamics, modeling the creation and deletion of connections. In this work, we place ourselves within the framework of the Contact Process on Dynamic Edges on $\Z^d$ introduced by Linker and Remenik in~2020~\cite{linker2020contact}. Each edge of $\Z^d$ is now allowed to switch, according to a Markov process, between two states: available or unavailable. This new process has two additional parameters: $p\in(0,1)$ which can be seen as the density of available edges and $v>0$ which is the update speed of an edge. A central issue is to understand how these additional edge dynamics influence the survival of the contact process.

In~\cite{linker2020contact}, many very interesting results about this process have been proven, including the existence of an immunity zone for the parameters $(p,v)$: it is a region in which the contact process cannot survive, no matter how large the infection parameter $\lambda$ is. Linker and Remenik proved that large $p$ can not be in the immunity zone: the dynamics on the links of the network has not a sufficient impact to prevent the infection from surviving. On the contrary, for small enough $p$, there  is a choice of slow speed parameter $v$ such that $(p,v)$ is in the immunity zone: the graph is too sparse for the infection to survive, whatever $\lambda$. The model thus exhibits a new phase transition phenomenon, whose  critical parameter is called $p_1$. The question of the comparison between $p_1$ and $p_c(d)$, the critical parameter of the static edge percolation on $\Z^d$, was left open in~\cite{linker2020contact}.  

In a following paper, Hilario, Ungaretti, Valesin and Vares~\cite{hilario2022results} work on generalized contact processes (which are no longer necessarily Markovian) and one of their results is $p_1\leq p_c(d)$. In this paper, we will prove the strict inequality $p_1<p_c(d)$ for $d\geq 2 $ (it was already proved for $d=1$ in~\cite{linker2020contact}), that is, the contact process can survive in a slighlty subcritical dynamical percolation cluster, no matter how small the update speed $v$ is. This contrasts with the almost sure extinction of the contact process on a static subcritical (thus finite) percolation cluster.

To prove the above mentioned strict inequality, we will use an algorithmic approach, discovering step by step a cluster of infected sites from the origin, and comparing it with a cluster of a new percolation with a well-chosen parameter.

\section{Model and results.}
%%%%%%%%%%%%%%%%%%%%%%%
\subsection{Model}
For $d \ge 2$, we endow the set of vertices $\Zd$ with two sets of edges: $\E_d$ (resp. $\vec{\E}_d$) is the set of unoriented (resp. oriented) edges between sites at euclidean distance one. For $x \in \Zd$, we denote by $\mathcal N(x)$ the set of its $2d$ neighbors.

\medskip
\noindent
\textbf{Stationary dynamic environment, with parameters $(p,v)$}. Let $p \in (0,1)$ and $v>0$ be fixed. The environment with parameters $(p,v)$ is the Feller process $\omega=(\omega_e^t, e \in \E_d)_{t \ge 0}$ such that
\begin{itemize}
\item  $(\omega_e^t)_{t \ge 0}$, $e \in \Ed$, are independent and identically distributed processes.
\item Let $e \in \E_d$ be fixed. The environment $(\omega_e^t)_{t \ge 0}$ at edge $e$ is a continuous-time Markov chain, with values in $\{0,1\}$, switching from state $0$ (\emph{unavailable}) to state $1$ (\emph{available}) at rate $vp$ and from state $1$ to state $0$ at rate $v(1-p)$. Its only invariant law is the Bernoulli law $\mathcal B(p)$ with parameter $p$.
\end{itemize}

The product measure $\mathcal B(p)^{\otimes \E_d}$ is invariant for the environment $(\omega_e^t, e \in \E_d)_{t \ge 0}$. We take as initial condition a family $(\omega_e^0)_{e \in \Ed}$ of independent and identically distributed random variables with law $\mathcal B(p)$, to obtain a stationary environment: at each time $t$, the environment $(\omega_e^t)_{e \in \E_d}$ is a Bernoulli percolation with parameter $p$ on the edges of $\Zd$. Still, the state of a given edge has time correlations. 

%Let us recall that $\mathcal B(p)^{\otimes \E_d}$ is called the percolation measure and 
The percolation event is the event in which the subgraph induced by available edges has an infinite connected component. We then let 
\[
p_c(d)=\inf\{ p \in (0,1): \;  \P (\text{percolation})>0\}.
\]
It is easy to see that $p_c(1)=1$ and it is well-known that $p_c(d)\in(0,1)$ for $d\geq 2$.

\medskip
\noindent
\textbf{Contact process with parameter $\lambda>0$.} In this dynamical percolation environment, we consider a contact process with parameter $\lambda$. More precisely, given the environment $\omega=(\omega_e^t, e \in \E_d)_{t \ge 0}$, we consider the Markov process $(\eta_t)_{t \ge 0}$, taking its values in $\{0,1\}^{\Zd}$, or equivalently in the set of subsets of $\Zd$, such that
\begin{itemize}
\item $\eta_0=\{0\}$.
\item At time $t$, an infected site (i.e. in state $1$) recovers at rate $1$, independently of everything, while a healthy site $x$ (i.e. in state $0$) becomes infected at a rate proportional to the number of its neighbors that are both infected and linked to it by an available edge, i.e. at rate:
$$ \lambda \sum_{y \in \mathcal N(x)} \omega_{\{x,y\}}^t \eta_t(y).$$
\end{itemize}
It can be checked that there indeed exists a càdlàg Feller process $(\{\omega_e^t, e \in \Ed\}, \eta_t)_{t \ge 0}$ with the previous evolution rules: it is called the \emph{contact process with dynamic edges} with parameters $(p,v,\lambda)$, and is denoted $\text{CPDE}(p,v,\lambda)$. Note that the contact process $(\eta_t)_{t \ge 0}$ on its own is no longer Markovian. The CPDE can also be built via a natural adaptation of the classical graphical construction introduced by Harris~\cite{Harris78graphic}, we refer to~\cite{linker2020contact} for further details.

We say that the CPDE$(p,v,\lambda)$ lives forever if, for all $t\geq 0$, $\eta_t\neq \emptyset$ and dies out otherwise. As for the classical contact process, we set 
\begin{align*}
%    \theta(p,v,\lambda) & = \P(\text{$\text{CPDE}(p,v,\lambda)$ lives forever}), \\
%    \lambda_0(p,v) & =\inf\{ \lambda>0: \; \theta(p,v,\lambda)>0\}.
    \lambda_0(p,v) & =\inf\{ \lambda>0: \; \P(\text{$\text{CPDE}(p,v,\lambda)$ lives forever})>0\}.
\end{align*} 

\subsection{Previous results}
%The function $\theta$ is non decreasing in $p$ and $\lambda$ by the natural coupling, and consequently, 
By the natural coupling, the function $\lambda_0$ is non increasing in $p$. The effect of $v$ is much more subtle: increasing $v$ accelerates both the closure and the opening of the edges, and it is unclear whether this facilitates the spread of infection or not. When introducing the model, Linker and Remenik~\cite{linker2020contact} studied the behavior of this process with respect to the parameters $\lambda, v,p$. In particular, \cite{linker2020contact} has highlighted the existence of an immunity zone $\mathcal I$ defined as follows
$$ \mathcal I =\{(p,v) \in (0,1) \times (0,+\infty): \; \lambda_0(p,v)=+\infty \}. $$
In words, if $(p,v)$ is in the immunity zone, then the contact process almost surely dies out, no matter how large the infection parameter is. By monotonicity, it is possible to define
\[
p_1=\inf\{ p \in (0,1): \; \forall v>0, \; \lambda_0(p,v)<+\infty\}.
\] 

We now review some of the results in \cite{linker2020contact} related to the immunity zone and the critical parameter $p_1$.
%existence de la fonction
%Theorem 2.6 of \cite{linker2020contact} says that for all $v>0$, there exists $p_0(v)\in(0,1)$ small enough such that for $p<p_0(v)$, $\lambda_0(v,p)=+\infty$, i.e. the process dies out for all $\lambda$ so $(p,v)$ belongs to the immunity zone. 
Theorem 2.6 says that for every $v>0$, for every $p>0$ small enough, $\lambda_0(v,p)=+\infty$, i.e.  $(p,v)$ belongs to the immunity zone. 
%decroissance de la fonction
Proposition~2.1 establishes that the function $v\to \frac1v \lambda_0(v,p)$ is non increasing in $v$, so in particular, $\lambda_0(v,p)=+\infty$ implies $\lambda_0(v',p')=+\infty$ for all $v'\leq v$, $p'\leq p$. 
%limite en 0
In Theorem 2.7, it is proved that for every $p<1$ large enough, for every $v>0$, $\lambda_0(v,p)<+\infty$, i.e. $(p,v)$ does not belong to the immunity zone.
Finally, if we denote by $\lambda_c(d)$ the critical parameter for the standard contact process on $\Zd$, they proved in Theorem~2.3 that for any $p$, $\lim_{v\to\infty}\lambda_0(v,p)=\lambda_c(d)/p$.
%using that for $v$ large, the update of an edge is fast enough to consider that its states between two moments are almost independent and the CPDE behaves much like a contact process with rate $\lambda p$. 

These results can be summarized in the following diagram:

\begin{center}
\begin{tikzpicture}[scale=0.8]
\draw [->] (0,0) -- (8.5,0) ;
\draw (8.5,0) node [below] {{\footnotesize $v$}} ;
\draw [->] (0,0) -- (0,3.5) ;
\draw (0,3.5) node [left] {{\footnotesize $p$}} ;
\draw (0,3) node {$\bullet$} ;
\draw (0,3) node [left] {{\footnotesize $1$}} ;
\draw (0,0) node {$\bullet$} ;
\draw (0,0) node [left] {{\footnotesize $0$}} ;
\draw (0,2) node {$\bullet$} ;
\draw (0,2) node [left] {{\footnotesize $p_1$}} ;
%\draw [domain=0:8] plot (\x, {exp(-\x)});
\draw [dotted,domain=0:8] plot(\x,{2*exp(-0.3*\x)});
\draw (2,2) node [right]{{\footnotesize  possible survival for large $\lambda$}};
\draw (0.5,0.5) node [right]{{\footnotesize immunity}};
\end{tikzpicture} 
\end{center}
  
% LR definit $p_1=\lim_{v\rightarrow0 } p_0(v)$ 

These results were completed by Hilario, Ungaretti, Valesin and Vares in~\cite{hilario2022results}. Among several stronger results, they proved in Theorem 1.1 that for any fixed dimension $d \ge 2$,
\begin{equation}
\label{large_inequality2}
\forall p >p_c(d) \quad \exists \lambda>0 \quad \forall v>0 \quad \P(\text{CPDE}(p,v,\lambda) \text{ lives forever})>0.
\end{equation}
This implies in particular that $p_1 \le p_c(d)$.

\subsection{Result and sketch of proof}
In this work, we prove that the inequality $p_1 \le p_c(d)$ is in fact a strict inequality:

\begin{theorem}
\label{TH-survieSC}
Let $d \ge 2$ be fixed. Then,
%There exists $p<p_c(d)$ such that for every $v>0$, there exists $\lambda>0$ such that $\mathrm{CPDE}(p,v,\lambda)$ on $\Zd$ lives forever with strictly positive probability. 
\begin{equation}
    \label{strict_inequality2}
    \exists p<p_c(d) \quad \forall v>0 \quad \exists \lambda>0\quad \P(\mathrm{CPDE}(p,v,\lambda) \text{ lives forever })>0.
\end{equation}
In other words, $p_1 < p_c(d)$.
\end{theorem}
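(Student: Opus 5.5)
We will prove Theorem~\ref{TH-survieSC} with an enhancement-type argument. The infection, helped by the time dynamics of the edges, can use an edge more than once: if an edge $e=\{x,y\}$ is closed when first examined, it may open later while $x$ (or a neighbour) is still infected. We will build a sequence of ``once infected'' sites step by step and couple it with the open cluster of the origin in a static Bernoulli bond percolation on $\Zd$ with an effective parameter $q=q(p,v,\lambda)$. The goal is to find $p<p_c(d)$ such that, for every $v$, $q>p_c(d)$ once $\lambda$ is large. Survival of the CPDE will then follow from percolation of the comparison process.

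\textbf{Step 1: fix the time scale and the exploration.} Fix $v>0$. We will work with a time scale $T=T(v)$, chosen so that each edge has probability at least some $\delta>0$ (depending only on $p$, not on $v$) to make a closed-to-open transition during a window of length $T$; for instance $T$ of order $1/(vp)$. Then take $\lambda$ large and consider infected sites that stay infected for time of order $T$. A site recovers within time $T$ with probability about $1-e^{-T}$. When $T$ is large this is a problem, so we will instead use ``boxes of time'' and a renormalised infection that is maintained along an open edge. With $\lambda$ huge, an infected vertex with at least one open incident edge keeps the infection alive on that edge pair for a time exponential in $\lambda$. This gives each discovered site a lifetime long compared to $T$ with probability close to $1$.

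\textbf{Step 2: the exploration algorithm.} We explore edges one at a time from the origin, as in the standard algorithmic construction of a percolation cluster. When the algorithm is at an infected site $x$ at a given time and examines an edge $e=\{x,y\}$ to an unvisited $y$, it declares $e$ ``good'' in two cases. Either $e$ is open at that time, which has probability $p$ by stationarity and, if the exploration is arranged properly, is conditionally independent of the past. Or $e$ is closed but opens before $x$ recovers, and infection then crosses it quickly. The second case adds an extra probability $\epsilon>0$ which is uniform in $v$ after the choice of $T$ and $\lambda$. So each edge is good with conditional probability at least $p+\epsilon'$ given the past, where $\epsilon'$ depends on $d$ but not on $v$. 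A domination lemma for sequentially explored clusters (in the spirit of Liggett--Schonmann--Stacey) will then show that the set of infected sites found stochastically dominates the cluster of Bernoulli$(p+\epsilon')$ percolation. Since $\epsilon'$ is independent of $p$ near $p_c$, we may choose $p\in(p_c-\epsilon'/2,p_c)$, and the comparison cluster is infinite with positive probability. An infinite set of once-infected sites yields survival, because the exploration only reaches a new site via a currently infected site.

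\textbf{Main obstacle.} The hard step is the independence needed by the coupling. Revisiting an edge, or waiting for it to open, reveals information about the environment at later times and about the states of neighbouring edges around $x$ that will be examined later. This information could bias the later conditional probabilities below $p$. We will handle it by examining each edge exactly once, at a stopping time, and by choosing the order of exploration carefully, for example by processing all edges out of $x$ in a single time window. Then the conditional law of each unexplored edge is still the stationary $\mathcal B(p)$ chain, by the Markov property and the independence of edges. The extra gain $\epsilon'$ is used only on edges already found closed, and for those edges the re-opening time is a fresh exponential. The remaining technical point is to make $\epsilon'$ uniform in $v$, both as $v\to0$ and as $v\to\infty$. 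For small $v$ this requires $\lambda$ so large that the survival time of $x$, by keeping infection on an open incident edge, exceeds $1/v$. That is the reason $\lambda$ is allowed to depend on $v$ in \eqref{strict_inequality2}.
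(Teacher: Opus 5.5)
\textbf{There is a genuine gap.} Your argument rests on one claim: every explored edge is good with conditional probability at least $p+\epsilon'$ given the past, uniformly in $v$. This claim is not established. Your independence discussion covers only the environment of the edge being examined (fresh $\mathcal B(p)$ state, fresh exponential re-opening time), which is the easy part.

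For small $v$, a second chance from $x$ can only succeed if $x$ is kept infected while $e$ is closed. In practice this must be done by the parent edge $\{w,x\}$, the only incident edge known to be open. That edge must stay open until $e$ opens (probability about $p$, the same order as the gain itself), and the two-site contact process on it must keep $x$ infected. These resources, together with the recovery clock $X_x$, are shared by all edges leaving $x$. The marks of $w$ and $x$ are also shared with $w$'s other children and with $w$'s own sustaining process. So the good-indicators form a finitely dependent field whose correlations are of order one, not small.

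What follows depends on what ``the past'' contains:
\begin{itemize}
\item If the past contains $X_x$ (as it does once you simulate infection and recovery times), your bound is false. On $\{X_x\ll 1/\lambda\}$ an open edge fails its first chance, so the conditional probability of being good is at most about $(1-p)p<p$.
\item If the past is only the good/bad outcomes, the bound might hold. Proving it, however, requires controlling conditioning on failed siblings (which pushes the closing time of $\{w,x\}$ down) and on chains of shared marks, and you give no such argument.
\end{itemize}
Liggett--Schonmann--Stacey does not supply this. It gives domination by a product measure only when the marginals are close to $1$, and then at a strictly smaller density. Here the relevant marginal is about $p+\epsilon$ with $p$ near $p_c(d)$. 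Two minor points: the origin has no sustaining edge, and the two-site process survives for a time of order $\lambda$, not exponential in $\lambda$; both are harmless.

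\textbf{The missing idea is to stop trying to enhance every edge.} The paper proceeds as follows.
\begin{itemize}
\item It applies Liggett--Schonmann--Stacey only to the first-chance indicators $\xi_{(y,z)}$, whose marginal $\lambda/(\lambda+1+v(1-p))$ tends to $1$.
\item It allows second chances only on the sparse set $\vec{\E}_d^s$ of edges between $a$ and $a+e_1$, $a\in(1+4\Z)^d$. Their neighbourhoods are pairwise disjoint, their origins are never the root, and their origins are always infected through a first chance.
\item It takes the worst case over the unknown parent, $\underline\zeta_{(y,z)}=\prod_x\zeta^x_{(y,z)}$. This makes the second-chance variables exactly independent.
\item FKG and the two-site contact process then give $\P(\underline\zeta_{(y,z)}=1)\ge r$, with $r$ independent of $v$.
\end{itemize}
The comparison model is therefore $\mathrm{Perco}(d,pq,b)$, with extra density $b\ge r(1-p)$ only on that sparse sublattice. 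The Aizenman--Grimmett enhancement theorem is then needed to conclude that this model percolates for $pq$ slightly below $p_c(d)$.

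A homogeneous comparison with Bernoulli$(p+\epsilon')$, as you propose, would avoid Aizenman--Grimmett altogether. But justifying it is exactly the missing content, and this kind of construction does not deliver it without substantial further work.
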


    Note that the order of quantifiers in~\eqref{strict_inequality2} is weaker than in~\eqref{large_inequality2}. In fact, it is optimal: it is proved in \cite{linker2020contact} (Theorem~2.4) for dimension~1 and \cite{hilario2022results} (Theorem~1.1~i) for dimension $d\geq2$ that
$$\forall p<p_c(d)\quad  \lim_{v\to 0} \lambda_0(v,p)=+\infty,$$ 
so, for every $\lambda$, there exists $v$ small enough such that the contact process dies out.  %multi scale cascading renormalization using the time extinction of the PC in a finite graph, which lead to prove EXTINCTION 

%A block consists in  3 consecutives edges, observed in a time windom with width $T$. A block is said to be good if there is no switch to unavailable state, if there is at least one switch to available state for ech edge, and if there are "a lot of" infections. For $p$ close enough to 1, and for any $v$, it is possible to choose $T$ and $\lambda$ such that the good block has high probability and conclude with a classical comparison with oriented percolation. The fact that $p$ is close to $1$ implies that the density of unavailable edges goes to $0$; it does not seem possible to extend this argument to higher dimension because regardless of how close $p$ is to $p_c$, the density does not go to 0. 

Theorem \ref{TH-survieSC} in dimension 1 is already proved in~\cite{linker2020contact}. The argument is based on a simple block construction, to make a coupling between the CPDE and a two dimensional supercritical oriented percolation on $\Z \times \N$. In dimension 1, $p_c(1)=1$, and the block construction used there relies on the fact that when $p<p_1$ goes to 1, the density of unavailable edges goes to $0$: this cannot be extended to higher dimensions, where $p_c(d)<1$.

To prove $p_1\leq p_c(d)$, the authors in \cite{hilario2022results} also use a block construction to make a coupling between the CPDE and a supercritical oriented percolation. The construction is more complex and is based on the fact that, for a fixed $p>p_c(d)$, the largest connected component of available edges in a finite large box is stable enough to allow survival, regardless of how $v$ is small. Here again, it does not seem possible to extend the argument to $p<p_c(d)$ because the connected components of available edges are, in this case, small or unstable.

Our strategy to prove Theorem \ref{TH-survieSC} is different: we use an algorithmic point of view to discover step by step a subset of the set of at least once infected sites in the CPDE, and we compare it, when the parameters are well chosen, with a non-oriented supercritical percolation cluster on the edges of $\Zd$.

In Section 3, we begin by explaining this strategy in a simpler algorithm, which provides a new proof for \eqref{large_inequality2}.
We take $p>p_c(d)$ and observe that at the first time when an edge is examined to transmit the infection, its state follows the stationary distribution $\mathcal B(p)$, and is independent of every thing else. Based on this observation, our algorithm explores a spanning tree of the connected component of the origin in the standard supercritical percolation with parameter $p$ on the edges of $\Zd$. We orient this spanning tree from the origin to the leafs. The algorithm also checks if the infection succeeds or not along each oriented edge $(x,y)$ of this random tree, by comparing the delay before the next recovery of $x$ and the delay before the next infection through $(x,y)$. Keeping only the successful infections  decimates the spanning tree. If this decimation is not too strong, which is the case if $\lambda$ is large enough, the remaining tree containing the origin spans a new cluster that can be compared to the cluster of a still supercritical percolation. This implies that the built subset of infected sites in the CPDE is infinite with positive probability. As the egdes are examined at most once, the update speed $v$ plays no role here. 

In section 4, to prove Theorem \ref{TH-survieSC}, we need to modify and upgrade the previous algorithm. Indeed, when $p<p_c(d)$, the cluster of the origin in the percolation with parameter $p$ is almost surely finite. To recover some supercriticality, we examine "second chance infections" for edges in some subnetwork: if $(x,y)$ is such an edge and if it is unavailable when we examine it for a first attempt of infection, we simulate the delay before its next switch to the available state (which involves the update speed $v$), and we try a second infection: this gives an extra probability of being open to theses edges. By an enhancement argument, we obtain the needed supercriticality for well chosen parameters.

%%%%%%%%%%%%

\section{Warm up: the supercritical case $p>p_c(d)$}

In this section, we provide an alternative proof for \eqref{large_inequality2}, on which we will elaborate in the next section to prove Theorem \ref{TH-survieSC}. We build a coupling between a process that is stochastically dominated by the $\text{CPDE}(p,v,\lambda)$ and that stochastically dominates the connected component of the origin in a standard Bernoulli percolation on the edges $\E_d$ of $\Zd$. 

\subsection*{First Step.}
In the following Algorithm \ref{ALG-CPDE1-FPP}, we build a process $\underline\eta=(\underline{\eta}_t)_{t \ge 0}$ stochastically dominated by the $\text{CPDE}(p,v, \lambda)$, by considering only infections that succeed at the first attempt to use the corresponding edges, and we also build a coupling with a rooted random tree $T$. Note that the algorithm may not stop, in which case the set $S_T$ of sites treated by the algorithm grows infinitely.

To simplify the analysis, we first introduce all the random variables needed in the algorithm. We consider 3 independent families of independent and identically distributed random variables:  
\begin{itemize}
    \item $(\omega_e)_{e \in \E_d}$, with common law $\mathcal B(p)$, for the states of the edges,
    \item $(X_x)_{x \in \Zd}$, with common law $\mathcal E(1)$, for the delays before recoveries,
    \item $(X_{(x,y)})_{(x,y) \in \vec{\E}_d}$, with common law $\mathcal E(\lambda)$, for the delays before infections.
\end{itemize}
Here is the algorithm:
\begin{algorithm}[h]
\begin{algorithmic}[1]
\Require{$p,\lambda$}
\State $S \gets \{\mathbf{0}\}$, $S_T \gets \varnothing$, $E_0 \gets \varnothing$, $E_1 \gets \varnothing$, $t_I(\mathbf{0}) \gets 0$, $\text{events} \gets \{(t_I(\mathbf{0}),\mathbf{0},+)\}$,
\While{$S  \ne \varnothing$}
    \State Take $x$ in $S$ with some arbitrary deterministic rule, 
%    \State $X_x \sim \mathcal E(1)$, 
    \State $t_R(x) \gets t_I(x)+X_x$, add $\{(t_R(x),x,-)\}$ to $\text{events}$, 
    \For{$y \in \mathcal{N}(x)$ with $y \notin S\cup S_T$} 
%        \State $X_{(x,y)} \sim \mathcal E(\lambda)$, $\omega_{\{x,y\}}\sim \mathcal B(p)$,
        %\If{$\zeta_{(x,y)}=1$}
        \If{$X_{(x,y)}<X_x$ and $\omega_{\{x,y\}}=1$}
            \State $t_I(y) \gets t_I(x)+X_{(x,y)}$, add $y$ to $S$, add $(t_I(y),y,+)$ to $\text{events}$ 
            \State add $(x,y)$ to $E_1$, 
        \Else $\;$ add $(x,y)$ to $E_0$, 
        \EndIf
    \EndFor
    \State move $x$ from $S$ to $S_T$.
\EndWhile
\end{algorithmic}
\caption{The dominated process  and its infection tree}
\label{ALG-CPDE1-FPP}
\end{algorithm}

In Algorithm \ref{ALG-CPDE1-FPP}, we maintain two sets $S$ and $S_T$ of sites, and we build two sets $E_0$ and $E_1$ of oriented edges. The sites in $S_T$ are called "treated", and we say that those in $S \cup S_T$ have been "discovered". 

\begin{itemize}
\item The set $S_T$ contains sites that have been fully treated by the algorithm: we have simulated their infection time $t_I$, their recovery time $t_R$ and some selected outgoing infection edges. The sites in $S_T$ will never be examined again. 
\item The set $S$ contains a list of sites, with known dates of infection, that will finally be treated by the algorithm. 
\item At each passage in the while loop, the algorithm picks a site $x$ in $S$ with some arbitrary deterministic rule. Note in particular that the order in which the algorithm treats events may not be the natural time order. For each neighbor~$y$ not yet discovered, i.e. such that $y\notin S\cup S_T$, we check the lifetime $X_x$,  the delay $X_{(x,y)}$ before the first infection attempt through $(x,y)$ and the state $\omega_{\{x,y\}}$ of the edge. Depending on the results (see line~6 in Algorithm \ref{ALG-CPDE1-FPP}), either the infection succeeds, in which case the edge $(x,y)$ is said to be open, added to $E_1$, and $y$ is added to $S$, or it fails, in which case the edge $(x,y)$ is said to be closed and added to $E_0$.  We end the treatment of $x$ by moving it from $S$ to $S_T$.
\end{itemize}
We then define the process $(\underline{\eta}_t)_{t \ge 0}$ setting
\[
\forall t\ge 0 \quad \forall x\in \Zd \quad x \in \underline{\eta}_t \Longleftrightarrow t_I(x)\le t < t_R(x),
\]
with the convention that $t_I(x)=t_R(x)=\infty$ if these values have not been calculated by the algorithm. Let us now gather some simple observations. 

\begin{enumerate}[label=(P\arabic*), ref=(P\arabic*)]
    \item\label{OBS:infected} The set $S_T$ finally contains all sites that have been infected once. 
    \item\label{OBS:edge1}  There is at most one attempt of infection through any (unoriented) edge.
    \item\label{OBS:edgerate}  When, during one passage in the loop, we treat some $x \in S$, we only examine, between its infection time and its recovery time, the first infection to each neighbor not yet discovered, with rate $\lambda$. 
    \item\label{OBS:site1} Each site is infected and cured at most once.
    %\item\label{OBS:edgeState}The state of the edge $\{x,y\}$ is looked at most once, if an attempt of infection occurs through $(x,y)$ or through $(y,x)$. 
    \item\label{OBS:site}Once a site $y$ is discovered, i.e. enters $S$, the algorithm does not examine other potential infections of $y$ through any edge, so there is only one edge of $E_1$ pointing towards $y$.  
\end{enumerate}
Due to \ref{OBS:edge1}, at the moment when the infection occurs through the edge, the state of edge $\{x,y\}$ follows the law $\mathcal B(p)$ and is independent of everything else. This explains 
why the speed $v$ does not play a role here. 

Due to \ref{OBS:edgerate}, $(\underline{\eta}_t)_{t \ge 0}$ is stochastically dominated by the $\text{CPDE}(p,v, \lambda)$.

Due to \ref{OBS:infected}, if $S_T$ is infinite, then the simulated process $(\underline{\eta}_t)_{t \ge 0}$ lives forever, and so does, by stochastic comparison, the $\text{CPDE}(p,v,\lambda)$ for any $v>0$. 
%\regine{\textcolor{blue}{truc important et caché}}

Let us now examine the random oriented graph $T=(S_T,E_1)$. By construction and because of \ref{OBS:site}, the graph $T$ is a random tree, rooted at the origin and oriented from the root to the leaves: it is the infection tree of finally infected sites. Also by construction and because of \ref{OBS:site}:
\begin{itemize}
    \item[(P6)] each oriented edge from a site in $S_T$ to a site outside $S_T$ is closed.
%    \item[(P7)] an oriented edge between two sites in $S_T$ is either open, either closed or it has never been examined, 
%never    \item[(P8)] no edge between two sites in $S_T^c$ has been examined.
\end{itemize}  

\subsection*{Second step.} We now build a coupling between the infection tree $T$ simulated in Algorithm \ref{ALG-CPDE1-FPP} and the connected component of the origin in a standard Bernoulli percolation on the edges $\E_d$ of $\Zd$.

Let $p \in (p_c(d),1)$  be fixed. We choose $q \in (0,1)$ such that 
\[
pq>p_c(d).
\]
We define random variables that encode the success of the infection attempts:
$$
\forall (x,y) \in \vec{\E}_d \quad \xi_{(x,y)}=\mathbf{1}_{X_{(x,y)}<X_x}.
%\text{ and }\zeta_{(x,y)} = \xi_{(x,y)}\omega_{\{x,y\}}.
$$
Note that $(\omega_e)_{e \in \E_d}$ and $(\xi_{(x,y)})_{(x,y) \in \vec{\E}_d}$ are independent. Moreover $\xi_{(x,y)}$ and $\xi_{(x',y')}$ are independent as soon as $x'\neq x$.
As the $(\xi_{(x,y)})_{(x,y) \in \vec{\E}_d}$ are locally dependent Bernoulli random variables with parameter
\[
\P(\xi_{(x,y)}=1)=\P(X_{(x,y)} <X_x) = \frac{\lambda}{\lambda+1} \stackrel{\lambda \to +\infty}{\to} 1,
\]
we can apply the stochastic comparison result by Liggett, Schonmann and Stacey~\cite{Liggett_DomStoch}: for every $0<q<1$, we can choose $\lambda$ large enough and by increasing the probability space if necessary, we can consider on the same probability space a second family of independent and identically distributed random variables, 
\begin{equation}
\label{EQ:domstoch}
\text{$(\underline \xi_{(x,y)})_{(x,y) \in \vec{\E}_d}$, with law $\mathcal B(q)$, such that } \forall (x,y) \in \vec{\E}_d \quad \underline \xi_{(x,y)} \le \xi_{(x,y)} \; a.s. 
\end{equation}
and independent from $(\omega_e)_{e \in \E_d}$.

\smallskip
We now build a coupling between the previous set $S_T$ of sites once infected in $\underline \eta$ and the 
connected component of the origin of an independent Bernoulli percolation on edges $\E_d$ of $\Zd$ with parameter $pq$. More precisely, we remove from Algorithm~\ref{ALG-CPDE1-FPP} the simulation of the $t_I$'s and $t_R$'s (which allowed us to construct the process $\underline \eta$), we only keep the simulation of the infection tree $T=(S_T,E_1)$, and we add the construction of another coupled tree $(\underline S_T, \underline E_1)$.

\begin{algorithm}[h]
\begin{algorithmic}[1]
\Require{$p,\lambda$}
\State {$S \gets \{\mathbf{0}\}$, $S_T \gets \varnothing$}, {$E_0 \gets \varnothing$, $E_1 \gets \varnothing$}, 
\State $\underline S \gets \{\mathbf{0}\}$, $\underline S_T \gets \varnothing$,
 $\underline E_0 \gets \varnothing$, $\underline E_1 \gets \varnothing$, 
\While{$S \cup \underline S \ne \varnothing$}
    \State Take $x$ in $S\cup \underline S$ with some arbitrary deterministic  rule, 
    \If{\textcolor{blue}{$x \in S$}}
        \For{\textcolor{blue}{$y \in \mathcal{N}(x)$ with $y \notin S\cup S_T$}} 
            \If{\textcolor{blue}{$\xi_{(x,y)}\omega_{\{x,y\}}=1$}}
                \textcolor{blue}{add $y$ to $S$ and $(x,y)$ to $E_1$}, 
            \Else 
                $\;$ \textcolor{blue}{add $(x,y)$ to $E_0$}, 
            \EndIf
        \EndFor
        \State \textcolor{blue}{move $x$ from $S$ to $S_T$},
    \EndIf
    \If{$x \in \underline S$}
        \For{$y \in \mathcal{N}(x)$ with $y \notin \underline S\cup \underline S_T$}
            \If{$\underline \xi_{(x,y)}\omega_{\{x,y\}}=1$}
                add $y$ to $\underline S$ and $(x,y)$ to $\underline E_1$, 
            \Else 
                $\;$ add $(x,y)$ to $\underline E_0$, 
            \EndIf
        \EndFor
        \State move $x$ from $\underline S$ to $\underline S_T$.
    \EndIf
\EndWhile
\end{algorithmic}
\caption{Coupling between the set $S_T$ of once infected sites and independent Bernoulli percolation}
\end{algorithm}

First, note that if we omit the first "if" block (in blue), this simulates a Markovian construction and exploration algorithm of the connected component $\underline S_T$ of the origin in the standard independent Bernoulli percolation on the edges of $\Zd$ with parameter
$$\P(\underline \xi_{(x,y)}\omega_{\{x,y\}}=1) = \P(\underline \xi_{(x,y)})\P(\omega_{\{x,y\}}=1)=pq.$$
Moreover, for the same reasons as for $T$, the random graph $\underline T =(\underline S_T, \underline E_1)$ is a spanning tree of this connected component. 

Let us now check the following property of the coupling: at each step, 
\begin{equation}
\label{EQ:inclu}
    \underline S_T \subset S_T \text{ and } \underline S \cup \underline S_T \subset S \cup S_T.
\end{equation}
We proceed by recurrence. It is obviously true before entering the while loop.
Let us assume now that $\underline S_T \subset S_T \text{ and } \underline S \cup \underline S_T \subset S \cup S_T$ when we begin a passage in the while loop. We take $x \in S\cup \underline S$.
\begin{itemize}
    \item First case: $x \in S\cap \underline S$. Consider $y \in \mathcal{N}(x)$.
    As $\underline S \cup \underline S_T \subset S \cup S_T$, we have two cases:
    \begin{itemize}
    \item either $y \notin S \cup S_T$, and thus $y \notin \underline S \cup \underline S_T$. The edge $(x,y)$ is examined twice, and as $\P(\underline \zeta_{(x,y)} \le \zeta_{(x,y)})=1$, if $y$ is added to $\underline S$, it is also added to $S$, which preserves \eqref{EQ:inclu}.
    \item Or $y \in S \cup S_T$ and $y \notin \underline S \cup \underline S_T$. The site $y$ may be added to $\underline S$, but as it is already in $S \cup S_T$,  \eqref{EQ:inclu} is preserved.
    \end{itemize}
    At the end of the passage, the two moves of $x$ from $S$ to $S_T$ and from $\underline S$ to $\underline S_T$ preserve \eqref{EQ:inclu}.
    \item Second case: $x \in S$ and $x \notin \underline S$. The neighbors of $x$ may only be added to $S$, which preserves \eqref{EQ:inclu}. At the end of the passage, the move of $x$ from $S$ to $S_T$ preserves \eqref{EQ:inclu}.
    \item Third case: $x \notin S$ and $x \in \underline S$. As $\underline S \cup \underline S_T \subset S \cup S_T$, $x \in S_T$. Consider $y \in \mathcal{N}(x)$ and imagine it is added to $\underline S$: so $\underline \zeta_{(x,y)}=1$, which implies $\zeta_{(x,y)}=1$. As $x \in S_T$, this implies that $y$ is already in $S\cup S_T$ and its addition to $\underline S$ preserves~\eqref{EQ:inclu}. At the end of the passage, $x$ is moved from $\underline S$ to $\underline S_T$. As $x \in S_T$, this move  preserves \eqref{EQ:inclu}.
\end{itemize}
This ends the proof of \eqref{EQ:inclu}. Note that it may happen that $\underline S \not\subset S$ and $\underline E_1 \not\subset E_1$.

\medskip
We can now conclude the proof of \eqref{large_inequality2}. Let us recall that $p \in (p_c(\Zd),1)$ is fixed and we have chosen $q \in (0,1)$ such that 
$pq>p_c(\Zd)$. We have constructed~$\underline{S}_T$ which is the connected component of theorigin in a supercritical percolation of parameter~$pq$ so, with positive probability, $\underline S_T$ grows infinitely. We have also chosen $\lambda$ large enough so that stochastic domination~\eqref{EQ:domstoch} holds and, consequently, the coupling inclusion~\eqref{EQ:inclu} implies that $S_T$ also grows infinitely with positive probability. Since, by definition, $S_T$ is the set of once infected sites in the process $\underline \eta$, this process lives forever with positive probability. By stochastic comparison, the $\text{CPDE}(p,v,\lambda)$ lives forever with positive probability for any $v>0$. 

%%%%%%%%%%%%

\section{Slightly subcritical case, with second chance infections}

In this section, we prove Theorem \ref{TH-survieSC}.
Our proof, as the one for \eqref{large_inequality2}, is based on an algorithm that builds a coupling between a process stochastically dominated by the $\text{CPDE}(p,v,\lambda)$ and  that stochastically dominates the connected component of the origin in a standard Bernoulli percolation on the edges $\E_d$ of $\Zd$.
The percolation process will now have two types of edges, corresponding to two types of infections: the first chance ones, which are the same as in the previous section, and some new ones that we call second chance infections.

\subsection*{First step} We begin by simulating a process $\underline\eta$ stochastically dominated by the $\mathrm{CPDE}(p,v,\lambda)$, and by building its infection tree. We proceed by modifying and upgrading Algorithm \ref{ALG-CPDE1-FPP}. As before, each (unoriented) edge is treated at most once, but the treatment is more complex. 
Imagine that, at the beginning of a passage in the while loop, we take some $y \in S$. For each neighbor~$z$ not yet discovered, we examine the state $\omega_{\{y,z\}}$ of the unoriented edge $\{y,z\}$ at time $t_I(y)$  and we separate the cases:
\begin{itemize}
    \item If $\omega_{\{y,z\}}=1$, we simulate the next infection time of $z$ through the edge $(y,z)$, and the next switch to unavailable state of the edge $\{y,z\}$.
If the infection occurs before the recovery of $y$ and before the switch of $\{y,z\}$, it succeeds, we add $z$ to $S$ and $(y,z)$ to $E_1$ as before, otherwise it fails and we add $(y,z)$ to $E_0$.
\item If $\omega_{\{y,z\}}=0$ and if $\{y,z\}$ is in some subset $\vec{\E}_d^s$ of special edges defined below, we give it a second chance. We wait for the next change of the edge $\{y,z\}$ to available and we try the next infection through $\{y,z\}$: if it succeeds we add $(y,z)$ to $E_2$ and $z$ to $S$, otherwise we add $(y,z)$ to $E_0$. We distinguish between two subcases that will also appear in the forthcoming definition of the random variable encoding this second chance infection:
    \begin{itemize}
        \item[\eqref{indic_second_chance1}] Either $\{y,z\}$ switches to available before $y$ recovers, and we can examine if the next infection through $(y,z)$ succeeds, i.e. if it happens before both the recovery of $y$ and the next switch of $\{y,z\}$ back to unavailable.
        \item[\eqref{indic_second_chance2}] Or $y$ recovers before $\{y,z\}$ switches to available. In that case, we do the following. At time $t_I(y)$, the site $y$ has just been infected from its parent, say $x$. In particular, the edge $\{x,y\}$ is available. Imagine that the edge $\{y,z\}$ changes to available before the edge $\{x,y\}$ changes to unavailable, and that the standard contact process restricted to the edge $\{x,y\}$ and its two extremities is still alive with $y$ infected at that time: then we can examine again if the next infection through $(y,z)$ succeeds, i.e. if it happens before both the next recovery of $y$ and the next switch of $\{y,z\}$ back to unavailable.
    \end{itemize}
\end{itemize}

One might think that case ~\eqref{indic_second_chance1} would suffice. But if the update velocity $v$ is very small, the probability that $y$ is still infected when $\{y,z\}$ switches to the available state is very low, regardless of the value of $\lambda$. On the other hand, when $\lambda$ is large, the standard contact process $\tilde \eta_{\{x,y\}}$ has a high probability to still be infected when $\{y,z\}$ switches to the available state. That is why we need~\eqref{indic_second_chance2}.

As we will need some independence, we restrict these second chances to edges in
$$\vec{\E}_d^s =\{(x,x+e_1): \; x \in (1+4\Z)^d\} \cup \{(x+e_1,x): \; x \in (1+4\Z)^d\}.$$
\begin{comment}
\textcolor{blue}{Régine. Remarques pour le choix final de $\E_d^r$
\begin{itemize}
    \item prendre une notation qui suggère l'orientation des arêtes
    \item $0$ ne doit pas être l'origine d'une arête d'$\E_d^r$, car il n'a pas de parent
    \item un point $y$ origine d'une arête d'$\E_d^r$ ne doit pas être la destination d'une arête d'$\E_d^r$, car il doit avoir un parent, et c'est plus simple que le parent résulte d'une infection de première chance.
    \ietm on veut: $(\zeta_{(y,z)}^x)_{x,z}$ and $(\zeta_{(y',z)}^{x})_{x,z}$ are independent as soon as $y\ne y'$.
    \item invariance par "certaines translations", pour pouvoir faire de l'enhencement.
\end{itemize}}
\end{comment}
As in the previous section, we begin by introducing all the random variables needed for the algorithm. We consider the following independent families of independent and identically distributed random variables: 

\smallskip
\noindent
\underline{For the dynamic environment:}
\begin{itemize}
    \item $(\omega_e)_{e \in \E_d}$, with common law $\mathcal B(p)$, for the states of the edges,
    \item $(T^+_e)_{e \in \E_d}$, with common law $\mathcal E (vp)$, for the switches to available state,
    \item $(T^-_e)_{e \in \E_d}$, with common law $\mathcal E (v(1-p))$, for the switches to unavailable state.
\end{itemize}

\noindent
\underline{For the infection and recovery attempts:}
\begin{itemize}
    \item $(X_x)_{x \in \Zd}$ and $(X_x')_{x \in \Zd}$ , with common law $\mathcal E(1)$, for delays before recoveries,
    \item $(X_{(x,y)})_{(x,y) \in \vec{\E}_d}$, with common law $\mathcal E(\lambda)$, for delays before infections,
    \item $(\chi_x)_{x \in \Zd}$, i.i.d Poisson point processes with intensity $1$, for recovery marks,
    \item $(\chi_{\{x,y\}})_{\{x,y\} \in \E_d}$, i.i.d Poisson point processes with intensity $\lambda$ , for infection marks.
\end{itemize}
We then define new collections of random variables. In the first family, $\xi_{(y,z)}$  encodes if the next infection attempts through the edge $(y,z)$ occurs before the recovery of the infected extremity $y$ and before the next switch of the edge to the unavailable state:
\begin{align*}
    \forall (y,z) \in \vec{\E}_d \quad \xi_{(y,z)} & =\mathbf{1}_{X_{(y,z)}<\min(X_y,T^-_{\{y,z\}})}.
\end{align*}
Next, for every $(x,y) \in \vec{\E}_d$, we define a process $(\tilde \eta_t^{x,y})_{t \ge 0}$:
\begin{itemize}
    \item If $X_{(x,y)}<X_x$, $(\tilde \eta_t^{x,y})_{t \ge 0}$ is the standard contact process restricted to the edge $\{x,y\}$, starting with both sites infected, and using 
    \begin{itemize}
        \item[$\circ$] $\{X_x-X_{(x,y)}\} \cup (X_x-X_{(x,y)}+\chi_x)$ as recovery marks for $x$, 
        \item[$\circ$] $\{X_y\} \cup (X_y+\chi_y)$ as recovery marks for $y$, 
        \item[$\circ$] $\chi_{\{x,y\}}$ as infection marks.
    \end{itemize}
    Thanks to the Markov property, the random variables have the required distributions.
    \item If $X_{(x,y)}>X_x$, $(\tilde \eta_t^{x,y})_{t \ge 0}$ is the empty process.
\end{itemize}
Finally, in the last collection, the random variable $\zeta_{(y,z)}^x$ encodes the success of the second chance infection attempt through $(y,z) \in \vec{\E}_d^s$ with the help of $y$'s neighbor~$x$: if $(y,z) \in \vec{\E}_d^s$, for every $x \in\mathcal{N}(y)\setminus\{z\}$, 
\begin{align}
\zeta_{(y,z)}^x & = %\mathbf{1}_{T^+_{\{y,z\}}  <T^-_{\{x,y\}} \mathcolor{teal}{-X_{(x,y)}}} 
\mathbf{1}_{T^+_{\{y,z\}}  < X_y}\mathbf{1}_{X_{(y,z)}<\min(X_y-T^+_{\{y,z\}},T^-_{\{y,z\}})} \label{indic_second_chance1}\tag{$\star$} \\
& + \mathbf{1}_{X_y<T^+_{\{y,z\}}} \mathbf{1}_{T^+_{\{y,z\}}  <T^-_{\{x,y\}}-X_{(x,y)}}  \mathbf{1}_{ y \in  \tilde \eta^{x,y}_{T^+_{\{y,z\}}}} \mathbf{1}_{X_{(y,z)}<\min(X_y',T^-_{\{y,z\}})}.\label{indic_second_chance2}\tag{$\star\star$}
\end{align}
See Figure \ref{FIG1} for an illustration of \eqref{indic_second_chance2}.
%    \item If $(y,z) \notin \E_d^r$, for every $x \in\mathcal{N}(y)\setminus\{z\}$, 
%    $ \zeta_{(y,z)}^x = 0.$
%    \item To simplify the presentation of the algorithm, we say that the parent of $\mathbf{0}$ is $\mathbf{0}$, and we define for every $x \in\mathcal{N}( \mathbf{0})$, $\zeta_{(0,0)}^x=0$.
%To simplify the presentation of the next algorithm, 
%we also set $\zeta_{(y,z)}^x = 0$ when $(y,z) \in \E_d^r$ and $x=z$, when $(y,z) \notin \E_d^r$ and $x \in \mathcal N(y)$, and when $y=z=0$ and $x \in \mathcal N(0)$.

\begin{figure}[h]
    \centering
    \begin{tikzpicture}[scale=0.8]
\draw [red!10, fill = red!10] (-1,-2) -- (0,-2) -- (0,3.2) -- (-1,3.2);
\draw [red!20, fill = red!20] (-1,0) -- (0,0) -- (0,2) -- (-1,2);
\draw [red!10, fill = red!10]  (0,2) -- (1,2) -- (1,4.5) -- (0,4.5);
%\draw [-] (-3,0) -- (3,0) ;
\draw [-] (-3,-2) -- (3,-2) ;
\draw [dotted] (0,0) -- (3,0) ;
\draw [dotted] (-4.5,-2) -- (-3,-2) ;
%\draw (-3,0) node [below left] {{\footnotesize $0$}} ;
\draw [dotted] (0,2) -- (4,2) ;
\draw [<->, dotted] (2,0) -- (2,2) ;
\draw (2,2) node [below right] {{\footnotesize $T^+_{\{y,z\}}$}} ;
\draw [dotted] (-3.5,3.2) -- (-1,3.2) ;
\draw [<->, dotted] (-3.5,-2) -- (-3.5,3.2) ;
\draw (-3.5,3.2) node [below left] {{\footnotesize $T^-_{\{x,y\}}$}} ;
\draw [dotted] (-4,1) -- (-1,1) ;
\draw [<->, dotted] (-4,-2) -- (-4,1) ;
\draw (-4,1) node [below left] {{\footnotesize $X_x$}} ;
\draw [->] (-3,-2) -- (-3,5) ;
\draw (-3,5) node [left] {{\footnotesize $t$}} ;
\draw [dotted] (0,-2) -- (0,5) ;
\draw (0,-2) node {$\bullet$} ;
\draw (0,-2) node [below] {$y$} ;
\draw [dotted] (1,-2) -- (1,5) ;
\draw (1,-2) node {$\bullet$} ;
\draw (1,-2) node [below] {$z$} ;
\draw [dotted] (-1,-2) -- (-1,5) ;
\draw (-1,-2) node {$\bullet$} ;
\draw (-1,-2) node [below] {$x$} ;
\draw [->, red, thick] (-1,0) -- (0,0) ;
\draw [red, thick] (0,0) -- (0,0.4) ;
\draw (0,0.4) node [red] {$\times$} ;
\draw [dotted] (0,0.4) -- (2.5,0.4) ;
\draw [<->, dotted] (2.5,0) -- (2.5,0.4) ;
\draw (2.5,0.4) node [below right] {{\footnotesize $X_y$}} ;
\draw [->, red, thick] (-1,0.7) -- (0,0.7) ;
\draw [dotted] (-4.5,0) -- (-1,0) ;
\draw [<->, dotted] (-4.5,-2) -- (-4.5,0) ;
\draw (-4.5,0) node [below left] {{\footnotesize $X_{(x,y)}$}} ;
\draw [red, thick] (-1,-2) -- (-1,1) ;
\draw (-1,1) node [red] {$\times$} ;
\draw [->, red, thick] (0,1.4) -- (-1,1.4) ;
\draw [red, thick] (0,0.7) -- (0,1.6) ;
\draw (0,1.6) node [red] {$\times$} ;
\draw [->, red, thick, dotted] (0,1.1) -- (1,1.1) ;
\draw [red, thick] (-1,1.4) -- (-1,2) ;
\draw [->, red, thick] (-1,1.75) -- (0,1.75) ;
\draw [red, thick] (0,1.75) -- (0,2.9) ;
\draw (0,2.9) node [red] {$\times$} ;
\draw [->, red, thick] (0,2.4) -- (1,2.4) ;
\draw [dotted] (1,2.4) -- (4,2.4) ;
\draw [<->, dotted] (4,2) -- (4,2.4) ;
\draw (4,2.4) node [below right] {{\footnotesize $X_{(y,z)}$}} ;
\draw [dotted] (0,2.9) -- (3.5,2.9) ;
\draw [<->, dotted] (3.5,2) -- (3.5,2.9) ;
\draw (3.5,2.9) node [right] {{\footnotesize $X'_y$}} ;
\draw [dotted] (1,4.5) -- (4,4.5) ;
\draw [<->, dotted] (3,2) -- (3,4.5) ;
\draw (3,4.5) node [below right] {{\footnotesize $T^-_{\{y,z\}}$}} ;
\end{tikzpicture} 
    \caption{The first chance infection through $(x,y)$, encoded in $\xi_{(x,y)}$, succeeds, then the second chance infection through $(y,z)$ with the help  of $y$'s neighbor $x$, encoded by $\zeta^x_{(y,z)}$, succeeds thanks to a type $(\star\star)$ event. The time slots when edges are available are in pink. The contact process $\tilde \eta_{\{x,y\}}$ is simulated during the darker slot.}
    \label{FIG1}
\end{figure}
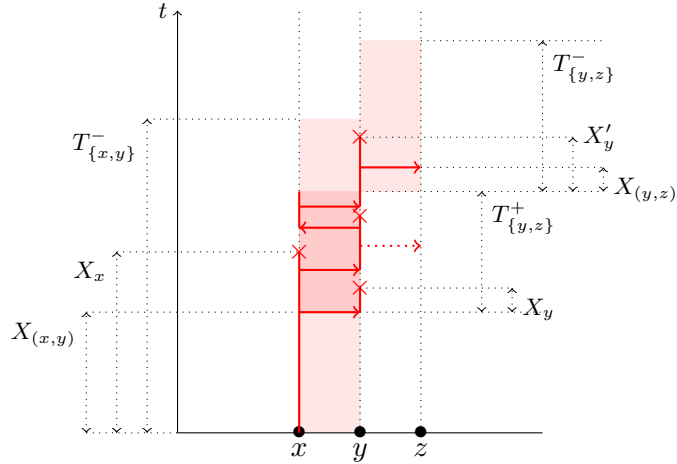
Note also that in the latter case, we use a new variable $X_y'$ for the recovery time of $y$ and we use $X_{(y,z)}$ as the delay before the next infection attempt after $T^+_{\{y,z\}}$: thanks to the Markov property, each one has the required distribution.

We are now ready to present the new algorithm:
\begin{algorithm}[H]
\begin{algorithmic}[1]
\Require{$p,v,\lambda$}
\State $S \gets \{\mathbf{0}\}$, $S_T \gets \varnothing$,  $E_0 \gets \varnothing$, $E_1 \gets \varnothing$, $E_2 \gets \varnothing$, $t_I(\mathbf{0})=0$, 
\State $\text{events} \gets \{t_I(\mathbf{0}),\mathbf{0},+)\}$, $\text{parent}(\mathbf{0}) \gets \mathbf{0}$,
\While{$S \ne \varnothing$}
    \State take $y$ in $S $ with some arbitrary deterministic rule, $x \gets \text{parent}(y)$, 
       \State $t_R(y) \gets t_I(y)+X_y$, add  $(t_R(y),y,-)$ to $\text{events}$,
        \For{$z \in \mathcal N(y)$ with $z \notin S \cup S_T$}
            \If{$\omega_{\{y,z\}}=1$}
                \If{$\xi_{(y,z)}=1$}
                   \State $t_I(z) \gets t_I(y)+X_{y,z}$, add $z$ to $S$, add $(t_I(z),z,+)$ to $\text{events}$, 
                  \State add $(y,z)$ to $E_1$, $\text{parent}(z) \gets y$,
                \Else $\;$ add $(y,z)$ to $E_0$, 
                \EndIf
            \ElsIf{$(y,z) \in \vec{\E}_d^s$ and $\zeta^x_{(y,z)}=1$}
                \State $t_I(z) \gets t_I(y)+T^+_{\{y,z\}}+X_{(y,z)}$, add $z$ to $S$,
                \State add $(t_I(z),z,+)$ to $\text{events}$, add $(y,z)$ to $E_2$, {$\text{parent}(z) \gets y$},
            \Else $\;$ add $(y,z)$ to $E_0$,
            \EndIf
        \EndFor  
        \State move $y$ from $S$ to $S_T$
\EndWhile
\end{algorithmic}
\caption{The second dominated process and its infection tree}
\label{ALG-CPDE2}
\end{algorithm}

\begin{comment}
    \begin{algorithm}[H]
\begin{algorithmic}[1]
\Require{$p,v,\lambda$}
\State $S \gets \{\mathbf{0}\}$, $S_T \gets \varnothing$,  $E_0 \gets \varnothing$, $E_1 \gets \varnothing$, $E_2 \gets \varnothing$, $t_I(\mathbf{0})=0$, 
\State $\text{events} \gets \{t_I(\mathbf{0}),\mathbf{0},+)\}$, $\text{parent}(\mathbf{0}) \gets \mathbf{0}$,
\While{$S \ne \varnothing$}
    \State take $y$ in $S $ with some arbitrary deterministic rule, 
       \State $t_R(y) \gets t_I(y)+X_y$, add  $(t_R(y),y,-)$ to $\text{events}$,
        \For{$z \in \mathcal N(y)$ with $z \notin S \cup S_T$}
            \If{$\omega_{\{y,z\}}=1$}
                \If{$\xi_{(y,z)}=1$}
                   \State $t_I(z) \gets t_I(y)+X_{y,z}$, add $z$ to $S$, add $(t_I(z),z,+)$ to $\text{events}$, 
                  \State add $(y,z)$ to $E_1$, $\text{parent}(z) \gets y$,
                \Else $\;$ add $(y,z)$ to $E_0$, 
                \EndIf
            \Else
                $\; x \gets \text{parent}(y)$, 
                \If{$\zeta^x_{(y,z)}=1$} 
                    \State $t_I(z) \gets t_I(y)+T^+_{\{y,z\}}+X_{(y,z)}$, add $z$ to $S$,
                    \State add $(t_I(z),z,+)$ to $\text{events}$, add $(y,z)$ to $E_2$, {$\text{parent}(z) \gets y$},
                \Else $\;$ add $(y,z)$ to $E_0$,
                \EndIf
            \EndIf
        \EndFor  
        \State move $y$ from $S$ to $S_T$
\EndWhile
\end{algorithmic}
\caption{The second dominated process $\underline \eta$ and its infection tree}
\label{ALG-CPDE2}
\end{algorithm}
\end{comment}
As in the previous section, we define
the process $(\underline{\eta}_t)_{t \ge 0}$ by setting
\[
\forall t\ge 0 \quad \forall x\in \Zd \quad x \in \underline{\eta}_t \Longleftrightarrow t_I(x)\le t < t_R(x),
\]
with the convention that $t_I(x)=t_R(x)=\infty$ if these values have not been calculated by the algorithm. Let us now gather some simple observations. We still have properties \ref{OBS:infected},  \ref{OBS:edge1}, \ref{OBS:site1} and \ref{OBS:site} but \ref{OBS:edgerate} is replaced by
\begin{enumerate}
    \item[(P3')]  When, during one passage in the loop, we treat some $y \in S$, we only simulate one infection to each not yet discovered neighbor $z$, with rate $\lambda$: either edge $\{y,z\}$ is available and we simulate the first infection through $(y,z)$, or it is unavailable, and we wait for its next switch to available and we may simulate the next infection through $(y,z)$ after this switch. 
\end{enumerate}
As before, $(\underline{\eta}_t)_{t \ge 0}$  is stochastically dominated by the $\text{CPDE}(p,v,\lambda)$ and the random oriented graph $T=(S_T,E_1\cup E_2)$ is the infection tree of once infected sites in $\underline \eta$. In particular, if $S_T$ grows infinitely, then the simulated process $(\underline{\eta}_t)_{t \ge 0}$ lives forever. 

\subsection*{Second step.} We now build a coupling between the previously simulated infection tree $T$ and the connected component of the origin in a percolation on $\Zd$ with two types of edges.

\medskip
The random variables  $(\xi_{(x,y)})_{(x,y) \in \vec{\E}_d}$ are identically distributed, with values in $\{0,1\}$,  and such that
\[
\P(\xi_{(y,z)}=1)=\P(X_{(y,z)}<\min(X_y,T^-_{\{y,z\}})) = \frac{\lambda}{\lambda+1+v(1-p)}.
\]
If $p\in (0,1)$ and $v>0$ are fixed, this term tends to $1$ as $\lambda$ tends to infinity. As the variables are only locally dependent, and independent from $(\omega_e)_{e \in \E_d}$, we can apply the stochastic comparison result by Liggett, Schonmann and Stacey~\cite{Liggett_DomStoch}:
for every $0<q<1$, we can choose $\lambda$ large enough and by increasing the probability space if necessary, we can consider on the same probability space a fourth family of independent and identically distributed random variables, 
\begin{equation}
\label{EQ:couplingTperco2}
\text{$(\underline \xi_{(x,y)})_{(x,y) \in \vec{\E}_d}$, with law $\mathcal B(q)$, such that } \forall (x,y) \in \vec{\E}_d \quad \underline \xi_{(x,y)} \le \xi_{(x,y)} \; a.s. 
\end{equation}
and independent from $(\omega_e)_{e \in \E_d}$.
%$$
%    \forall (x,y) \in \vec{\E_d} \quad \P(\underline \xi_{(x,y)} \le \xi_{(x,y)})=1.
%$$

\medskip
Let us now study the family $\zeta$. \\
Note that the $(\zeta_{(y,z)}^x)$ are independent from $(\omega_e)_{e \in \Ed}$, and identically distributed. 
Take now $(y,z) \in \vec{\E}_d^s$. As the parent $x$ of $y$ used in the algorithm is random, we naturally introduce:
\[
\forall (y,z) \in \vec{\E}_d^s \quad \underline \zeta_{(y,z)} =\prod_{x \in \mathcal N(y) \backslash \{z\}} \zeta_{(y,z)}^x.
\]
%We also set $\underline \zeta_{(\mathbf{0},\mathbf{0})}=0$. Note that if $(y,z) \in \E_d \backslash \E_d^r$, then $\underline \zeta_{(y,z)}=0$. 
By the choice we made for $\vec{\E}_d^s$, if $(y,z)$ and $(y',z')$ are two distinct edges in $\vec{\E}_d^s$, the set composed of $y$, $z$ and their neighbors is disjoint from the set composed of $y'$, $z'$ and their neighbors. This implies that the random vectors $(\underline \zeta_{(y,z)}, \underline\zeta_{(z,y)})$, indexed by $\{y,z\} \in \E_d$, are independent.

\begin{lemma} 
\label{LEM:zeta}
Fix $d \ge 2$, $p \in (0,1)$, $v>0$ and $0<r<\frac1{2^{2d-1}}\frac{p}{1+(2d-2)(1-p)}$. \\
There exists $\lambda_0>0$ such that for every $\lambda \ge \lambda_0$, for every $(y,z) \in \E_d^r$, 
$$\P(\underline \zeta_{(y,z)}=1) \ge r.$$
\end{lemma}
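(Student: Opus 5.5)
The plan is to exhibit, for $(y,z)\in\vec{\E}_d^s$ (the statement's $\E_d^r$), a single event on which every $\zeta^x_{(y,z)}$, $x\in\mathcal N(y)\setminus\{z\}$, equals $1$ at once. I then show that its probability converges, as $\lambda\to\infty$ with $p,v$ fixed, to something at least $\frac{p}{1+(2d-2)(1-p)}$. Since $r$ is strictly smaller than this (the factor $2^{-(2d-1)}$ only adds slack), this is enough. The key observation is that the term \eqref{indic_second_chance1} does not depend on $x$. Only the term \eqref{indic_second_chance2} involves $x$, through $T^-_{\{x,y\}}$, $X_{(x,y)}$, $X_x$ and the two-site process $\tilde\eta^{x,y}$.

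Write $T^+=T^+_{\{y,z\}}$ and $M=\min_{x\in\mathcal N(y)\setminus\{z\}} T^-_{\{x,y\}}$. Then $T^+\sim\mathcal E(vp)$ and $M\sim\mathcal E((2d-1)v(1-p))$ are independent, and neither depends on $\lambda$. I consider the event
\[
G=\{T^+<X_y,\ X_{(y,z)}<\min(X_y-T^+,T^-_{\{y,z\}})\}\ \cup\ \bigl(\{X_y<T^+<M\}\cap H\bigr),
\]
where $H$ requires the following for every $x\in\mathcal N(y)\setminus\{z\}$:
\[
X_{(x,y)}<X_x,\qquad T^+<T^-_{\{x,y\}}-X_{(x,y)},\qquad y\in\tilde\eta^{x,y}_{T^+},
\]
together with $X_{(y,z)}<\min(X'_y,T^-_{\{y,z\}})$. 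On $G$ the product $\underline\zeta_{(y,z)}$ equals $1$. As $\lambda\to\infty$, the exponential delays $X_{(y,z)}$ and $X_{(x,y)}$ have law $\mathcal E(\lambda)$ and tend to $0$ in probability. So every condition involving them holds with probability tending to $1$, uniformly once we intersect with the events $\{T^+<X_y-\delta\}$, $\{T^+<M-\delta\}$ and $\{T^+<X_x-\delta\}$ for a small $\delta>0$ that is let to $0$ afterwards. Modulo the contact-process condition, $\P(G)$ therefore tends to
\[
\P(T^+<X_y)+\P(X_y<T^+<M)\ \ge\ \P(T^+<M)=\frac{vp}{vp+(2d-1)v(1-p)}=\frac{p}{1+(2d-2)(1-p)}.
\]
The requirement $X_{(x,y)}<X_x$ is harmless because $X_{(x,y)}\to0$.

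The main obstacle is showing that $\P(\exists x:\ y\notin\tilde\eta^{x,y}_{T^+})\to0$ as $\lambda\to\infty$. A union bound over the $2d-1$ choices of $x$ reduces this to a single $x$. Here $T^+$ is independent of the marks $\chi_x,\chi_y,\chi_{\{x,y\}}$ and of $X_x,X_y,X_{(x,y)}$, and has density bounded by $vp$.

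First fix $t_0$ so that $\P(T^+>t_0)\le\epsilon$. On $[0,t_0]$ the recovery marks of $x$ and $y$ are almost surely finite, at most $N$ of them with $\P(N>K)\le\epsilon$ for some $K$. Choose $\delta>0$ so that, with probability at least $1-\epsilon$, any two of these marks are at distance more than $\delta$. Then take $\lambda$ large enough that, with probability at least $1-\epsilon$, every recovery mark is followed within time $\delta$ by an infection mark of $\chi_{\{x,y\}}$.

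On the intersection of these events the two-site process, which starts fully infected, never dies before $t_0$. Indeed, after each recovery the other site is still infected and reinfects within $\delta$, before the next recovery mark. Moreover, $y$ is healthy during $[0,t_0]$ only on a set of total length at most $K\delta$. Since $T^+$ is independent with density at most $vp$, the probability that $T^+$ falls in this set is at most $vpK\delta$, which is arbitrary small.

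Collecting the error terms gives $\liminf_{\lambda\to\infty}\P(\underline\zeta_{(y,z)}=1)\ge\frac{p}{1+(2d-2)(1-p)}>r$. This yields $\lambda_0$, and by translation invariance it is uniform over $(y,z)\in\vec{\E}_d^s$.
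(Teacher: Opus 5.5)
Your proof is correct, but it handles the key step differently from the paper, and it proves a stronger bound.

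\textbf{The paper's route.} It lower-bounds each $\zeta^x_{(y,z)}$ by $\mathbf 1_{T^+<T^-_{\{x,y\}}-X_{(x,y)}}\mathbf 1_{y\in\tilde\eta^{x,y}_{T^+}}$ times the sum of the two $y\to z$ indicators. It then factors $\P(\underline\zeta_{(y,z)}=1)\ge A_\lambda B_\lambda$ via the strong Markov property at $T^+$. The $2d-1$ two-site processes share the recovery marks of $y$, and the paper decouples them with the FKG inequality. Finally it uses the crude symmetry bound $\P(y\in\tilde\eta^{x_0,y}_{T^+}\mid T^+)\ge\frac12\P(T^+<\tau_{(x_0,y)}\mid T^+)$, together with a domination of the extinction time by an exponential variable whose parameter tends to $0$. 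That symmetry step is the only source of the factor $2^{-(2d-1)}$ in the statement.

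\textbf{Your route.} You keep the $x$-free term $(\star)$ separate and replace FKG by a union bound over $x$. You then show directly that $\P(y\notin\tilde\eta^{x,y}_{T^+})\to0$ through the graphical construction. Recovery marks are well separated and each is quickly followed by an infection mark, so $y$ is healthy only on a set of small Lebesgue measure. The independent $T^+$, whose density is at most $vp$, rarely lands in that set.

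\textbf{Comparison.} Your argument avoids FKG and the factor $\frac12$ per neighbour. It gives $\liminf_{\lambda\to\infty}\P(\underline\zeta_{(y,z)}=1)\ge\frac{p}{1+(2d-2)(1-p)}$, which is the lemma without the factor $2^{-(2d-1)}$. This would also allow a larger $r$, hence a larger $b_0$, in the proof of the theorem. In exchange, the paper's route gives an explicit survival bound in terms of $T^+$ and $\lambda$, and it needs less $\epsilon$--$\delta$ bookkeeping.

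\textbf{Two small points to tidy.}
First, the event $\{T^+<X_x-\delta\}$ in your list is a slip. For $X_{(x,y)}<X_x$ you only need $X_x>\delta$, or simply $\P(X_{(x,y)}<X_x)=\frac{\lambda}{\lambda+1}$. Intersecting with $\{T^+<X_x-\delta\}$ would lower the limit to a quantity that vanishes as $v\to0$. The lemma cannot afford that, because the admissible range of $r$ does not depend on $v$. Your displayed limit correctly omits this event, so only the wording needs fixing.

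Second, the recovery marks of $x$ in $\tilde\eta^{x,y}$ start at $X_x-X_{(x,y)}$, whose law depends on $\lambda$. You should state explicitly that on $\{X_{(x,y)}<X_x\}$ this shift is $\mathcal E(1)$ by memorylessness. That is what lets you fix $t_0$, $K$ and $\delta$ uniformly in $\lambda$ before sending $\lambda\to\infty$.
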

It will be important in the proof of Theorem \ref{TH-survieSC}, for selecting all the parameters in the correct ordrer, that the upper bound for $r$ in this lemma does not depend on the update speed $v$. 

\begin{proof} 
Fix $d \ge 2$, $p \in (0,1)$ and $v>0$. \\
Take $(y,z) \in \vec{\E}_d^s$ and note $\mathcal N=\mathcal N(y) \backslash \{z\}$ and $T^+=T_{\{y,z\}}^+$. For every $x \in \mathcal N$,
\begin{align*}
\zeta_{(y,z)}^x & = 
\mathbf{1}_{T^+ < X_y}\mathbf{1}_{X_{(y,z)}<\min(X_y-T^+,T^-_{\{y,z\}})}  \\
& \quad + \mathbf{1}_{X_y<T^+} \mathbf{1}_{T^+  <T^-_{\{x,y\}}-X_{(x,y)}}  \mathbf{1}_{ y \in  \tilde \eta^{x,y}_{T^+}} \mathbf{1}_{X_{(y,z)}<\min(X_y',T^-_{\{y,z\}})}, \\
& \ge \mathbf{1}_{T^+  <T^-_{\{x,y\}}-X_{(x,y)}}  \mathbf{1}_{ y \in  \tilde \eta^{x,y}_{T^+}}
  \\
& \quad \times  \left( \mathbf{1}_{T^+ < X_y}\mathbf{1}_{X_{(y,z)}<\min(X_y-T^+,T^-_{\{y,z\}})} + \mathbf{1}_{X_y<T^+} \mathbf{1}_{X_{(y,z)}<\min(X_y',T^-_{\{y,z\}})} \right).
\end{align*}
With the strong Markov property at stopping time $T^+$, we have
\begin{align*}
\P(\underline \zeta_{(y,z)}=1) 
& \ge  \P \left( \bigcap_{x \in \mathcal N} \left\{T^+ < T_{\{x,y\}}^-, \; y \in  \tilde \eta^{x,y}_{T^+}  \right\} \right) \times \P \left( X_{(y,z)} < \min(X_y,T_{\{y,z\}}^-)  \right) \\
&= A_\lambda \times B_\lambda.
\end{align*}
For the second term, we again use that
\begin{align}
    B_\lambda=\P\left( X_{(y,z)} < \min(X_y,T_{\{y,z\}}^-) \right)  = \frac{\lambda}{\lambda + 1 + v(1-p)}\stackrel{\lambda \to +\infty}{\longrightarrow} 1. \label{Bbound}
\end{align} 
Let us deal with the first term,
\begin{align*}
A_\lambda 
& = \P \left( 
 \left\{T^+ < \min_{x \in \mathcal N} T_{\{x,y\}}^-\right\} 
\cap \left\{ y \in \bigcap_{x \in \mathcal N} \tilde \eta^{x,y}_{T^+} \right\} \right). 
\end{align*}
Note that $(\tilde \eta_t^{x,y})_{t \ge 0,x \in \mathcal N}$ and $(T_{\{x,y\}}^-)_{x \in \mathcal N}$ are independent. Thus, conditioning on the $\sigma$-algebra generated by $T^+$, we get:
\begin{align*}
A_\lambda & =  \E \left[ 
\P \left( \left. T^+ < \min_{x \in \mathcal N} T_{\{x,y\}}^- \right| T^+ \right) 
\P \left( \left.  y \in \bigcap_{x \in \mathcal N} \tilde \eta^{x,y}_{T^+}  \right| T^+ \right) \right].
\end{align*}
Next, note that the sets $\eta^{x,y}_{T^+}$, for $x \in \mathcal N$, are non-increasing
with respect to configurations of the Poisson process of recoveries for $y$ during $[0, T^+]$ (and this is their only source of dependence), so we can apply the FKG inequality. We take an arbitrary $x_0 \in \mathcal N$, and we have
%{ref: d'après thèse Aurelia, Meester Roy 96}
\begin{align*}
A_\lambda &  \ge \E \left[ 
\P \left( \left. T^+ < \min_{x \in \mathcal N} T_{\{x,y\}}^- \right| T^+ \right) 
\P \left( y \in \tilde \eta^{x_0,y}_{T^+} | T^+\right)^{2d-1}
\right].
\end{align*}
We denote by $\tau_{(x_0,y)}$ the extinction time of $(\tilde \eta_t^{x_0,y})_{t \ge 0}$. \\
By symmetry, $\P ( y \in \tilde \eta^{x_0,y}_{T^+} | T^+) \ge \frac12 \P ( T^+ <\tau_{(x_0,y)} | T^+)$, so
\begin{align*}
A_\lambda &  \ge \frac1{2^{2d-1}} \E \left( 
\P \left( \left. T^+ < \min_{x \in \mathcal N} T_{\{x,y\}}^- \right| T^+ \right) 
\left( \P ( T^+ <\tau_{(x_0,y)} | T^+) \right)^{2d-1}
\right).
\end{align*}
Recall that $(\tilde \eta_t^{x_0,y})_{t \ge 0}$ starts from both sites infected. Whenever it contains only one infected site, it has a probability $\frac{1}{\lambda+1}$ to jump to the empty state on its next move. When it is in the full state, the delay before the next jump follows an exponential distribution with parameter $2$. So its extinction times stochastically dominates the random variable
\[
T=\sum_{i=1}^N X_i,
\]
where $N$ is a geometric random variable with parameter $\frac{1}{\lambda+1}$, $(X_i)_{i \ge 1}$ are exponential random variables with parameter $2$, all independent. 
Consequently, $T$ follows the exponential law with parameter $\frac{1}{2(\lambda+1)}$, so we have

\begin{align*}
  \forall t>0  \quad   \P(\tau_{(x_0,y)} \ge t)   \ge \P( T \ge t) &= \exp \left( - \frac{t}{2(\lambda+1)}\right) \stackrel{\lambda \to +\infty}{\longrightarrow} 1.
\end{align*}
As $T^+<+\infty$ almost surely, we can write
\begin{align*}
    \left( \P ( T^+ <\tau_{(x_0,y)} | T^+) \right)^{2d-1} & \ge \exp \left( - \frac{(2d-1)T^+}{2(\lambda+1)}\right)  \stackrel{\lambda \to +\infty}{\longrightarrow} 1 \; a.s.
\end{align*}
%and the term on the right converges almost surely to 1. 

By dominated convergence, we obtain
\begin{align}
\liminf_{\lambda \to +\infty} A_\lambda 
& \ge \frac1{2^{2d-1}} \E \left( 
\P \left( \left. \min_{x \in \mathcal N} T_{\{x,y\}}^- >T^+  \right| T^+ \right) \right) \notag\\
& = \frac1{2^{2d-1}} \P \left( \min_{x \in \mathcal N} T_{\{x,y\}}^- >T^+ \right) \notag\\
& =\frac1{2^{2d-1}}\frac{vp}{vp+(2d-1)v(1-p)} 
 = \frac1{2^{2d-1}}\frac{p}{1+(2d-2)(1-p)}.\label{Abound}
\end{align} 
Recall that $d,p$ and $v$ have been fixed at the beginning of the proof. Now, let us choose $r<\frac1{2^{2d-1}}\frac{p}{1+(2d-2)(1-p)}$. Using Equations~\eqref{Abound} and~\eqref{Bbound}, we can choose $\lambda$ large enough such that 
\[
\P(\underline \zeta_{(y,z)}=1)=A_\lambda\times B_\lambda \ge r,
\]
 which ends the proof.
\end{proof}

\smallskip
Finally, we introduce random variables to encode  percolation with two types of edges in $\Zd$. 
For every $(y,z) \in \vec{\E}_d$, for every $x \in\mathcal{N}(y)\setminus\{z\}$, we set
\begin{align}
    \label{EQ:zeta}
\epsilon_{(y,z)}^x  & = \xi_{(y,z)}\omega_{\{y,z\}} + 2 \mathbf{1}_{(y,z) \in \vec{\E}_d^s} (1-\omega_{\{y,z\}})\zeta_{(y,z)}^x, \\
\underline \epsilon_{(y,z)}  & =  \underline \xi_{(y,z)}\omega_{\{y,z\}} + 2\mathbf{1}_{(y,z) \in \vec{\E}_d^s}(1-\omega_{\{y,z\}}) \underline \zeta_{(y,z)}, \nonumber
\end{align}
and we have almost surely, for every $x \in \mathcal N(y) \backslash\{z\}$:
\[ 
\underline \epsilon_{(y,z)}  \le \epsilon_{(y,z)}^x.
\]
By construction, the random vectors $(\underline \epsilon_{(y,z)}, \underline\epsilon_{(z,y)})$, indexed by $\{y,z\} \in \E_d$, are independent,
and the law of $\underline \epsilon_{(y,z)}$ takes values in $\{0,1,2\}$ and is different whether $(y,z) \in \vec{\E}_d^s$ or not.

\smallskip
We can now, via the following Algorithm \ref{ALG-CPDE2-couplagePerco}, build a coupling between the previous set $S_T$ of once infected sites in $\underline \eta$ and the connected component of the origin of an independent Bernoulli percolation on edges of $\Zd$ with two types of edges.

\begin{algorithm}[H]
\begin{algorithmic}[1]
\Require{$p,v,\lambda$}
\State $S \gets \{0\}$, $S_T \gets \varnothing$,  $E_0 \gets 0$, $E_1 \gets 0$, $E_2 \gets 0$, $\text{parent}(0) \gets 0$,
\State $\underline S \gets \{0\}$, $\underline S_T \gets \varnothing$,  $\underline E_0 \gets 0$, $\underline E_1 \gets 0$, $\underline E_2 \gets 0$, 
\While{$S \cup \underline S \ne \varnothing$}
    \State take $y$ in $S \cup \underline S$ with some arbitrary deterministic rule, 
    \If{$\textcolor{blue}{y \in S}$}   
        \State $\textcolor{blue}{x \gets \text{parent}(y)}$,
        \For{\textcolor{blue}{$z \in \mathcal N(y)$ with $z \notin S \cup S_T$}}
            \State $\textcolor{blue}{i \gets \varepsilon_{(y,z)}^x}$,
            \textcolor{blue}{add $(y,z)$ to $E_i$},
            \If{\textcolor{blue}{$i \ge 1$}} \textcolor{blue}{$z$ to $S$, $\text{parent}(z) \gets y$},
            \EndIf
        \EndFor   
        \State \textcolor{blue}{move $y$ from $S$ to $S_T$},
    \EndIf
    \If{$y \in \underline S$}    
        \For{$z \in \mathcal N(y)$ with $z \notin \underline S \cup \underline S_T$}
            \State $i \gets \underline \varepsilon_{(y,z)}$,
            add $(y,z)$ to $\underline E_i$,
            \If{$i \ge 1$} $z$ to $\underline S$,
            \EndIf
        \EndFor   
        \State move $y$ from $\underline S$ to $\underline S_T$
    \EndIf
\EndWhile
\end{algorithmic}
\caption{Coupling between the set $S_T$ of once infected sites and independent Bernoulli percolation with two types of edges}
\label{ALG-CPDE2-couplagePerco}
\end{algorithm}

As before, note that if we omit the first "if" block (in blue), this simulates a Markovian construction and exploration algorithm of the connected component $\underline S_T$ of the origin in the percolation with two types of edges given by $\underline \varepsilon$. Moreover, $T=(S_T, E_1 \cup E_2)$ is the infection tree of the process $\underline \eta$ defined in Algorithm \ref{ALG-CPDE2}, and  $\underline T =(\underline S_T, \underline E_1 \cup \underline E_2)$ is a spanning tree of the connected component of the origin in the percolation with two types of edges. 

Let us now check the following property of the coupling: at each step, 
\begin{equation}
\label{EQ:incluBis}
    \underline S_T \subset S_T \text{ and } \underline S \cup \underline S_T \subset S \cup S_T.
\end{equation}

We proceed by recurrence. It is obviously true before entering the while loop.
Let us assume now that $\underline S_T \subset S_T \text{ and } \underline S \cup \underline S_T \subset S \cup S_T$ when we begin a passage in the while loop. We take $y \in S\cup \underline S$.
\begin{itemize}
    \item First case: $y \in S\cap \underline S$. Consider $z \in \mathcal{N}(y)$.
    As $\underline S \cup \underline S_T \subset S \cup S_T$, we have two cases:
    \begin{itemize}
    \item either $z \notin S \cup S_T$, and thus $z \notin \underline S \cup \underline S_T$. The edge $(y,z)$ is examined twice. As 
    $$\P(\forall x \in \mathcal N(y) \backslash\{z\}, \; \underline \epsilon_{(y,z)} \le \epsilon_{(y,z)}^x)=1,$$ 
    if $z$ is added to $\underline S$, it is also added to $S$, which preserves \eqref{EQ:incluBis}.
    \item Or $z \in S \cup S_T$ and $z \notin \underline S \cup \underline S_T$. The site $z$ may be added to $\underline S$, but as it is already in $S \cup S_T$,  \eqref{EQ:incluBis} is preserved.
    \end{itemize}
    At the end of the passage, the two moves of $y$ from $S$ to $S_T$ and from $\underline S$ to $\underline S_T$ preserve \eqref{EQ:incluBis}.
    \item Second case: $y \in S$ and $y \notin \underline S$. The neighbors of $y$ may only be added to $S$, which preserves \eqref{EQ:incluBis}. At the end of the passage, the move of $x$ from $S$ to $S_T$ preserves \eqref{EQ:incluBis}.
    \item Third case: $y \notin S$ and $y \in \underline S$. As $\underline S \cup \underline S_T \subset S \cup S_T$, $y \in S_T$. Consider $z \in \mathcal{N}(y)$ and imagine it is added to $\underline S$: so $\underline \epsilon_{(y,z)}=1$, which implies $\epsilon_{(y,z)}^x=1$ where $x$ is the parent of $y$. As $y \in S_T$, this implies that $z$ is already in $S\cup S_T$ and its addition to $\underline S$ preserves~\eqref{EQ:incluBis}. At the end of the passage, $y$ is moved $\underline S$ to $\underline S_T$. As $y \in S_T$, this moves  preserves \eqref{EQ:incluBis}.
\end{itemize}
This ends the proof of \eqref{EQ:incluBis}.

\smallskip
Let us come back to the percolation with two types of edges given by $\underline \varepsilon$.

\begin{defi}
  For any integer $d \ge 2$ and any real numbers $ a, b\ge 0$ with $ a+b \le 1$, let us denote by $\mathrm{Perco}(d,a,b)$ the percolation model on the edges $\E_d$ of $\Zd$ where
\begin{itemize}
\item the states of edges are independent; 
\item edges in $\E_d \backslash \vec{\E}_d^s$ open with probability $a$, and closed with probability $1- a$; 
\item edges in $\vec{\E}_d^s$ open with probability $ a+b $, and closed with probability $1- a-b$. 
\end{itemize}  
\end{defi}

A straightforward adaptation of the proof by Aizenman and Grimmett \cite{AizenmanGrimmett91}
ensures that $\mathrm{Perco}(d,a,b)$ with $b>0$ percolates more easily than the standard percolation on the edges $\E_d$ of $\Zd$ with parameter $a$:
\begin{lemma}[Aizenman-Grimmett] 
\label{Enhancement2}
Fix $d \ge 2$. For every $b_0 \in (0,1-p_c(d))$, there exists $a_0<p_c(d)$ such that for every $a\ge a_0$, for every $b \ge b_0$ such that $a+b \le  1$, the cluster containing the origin in $\mathrm{Perco}(d,a,b)$ is infinite with positive probability. 
\end{lemma}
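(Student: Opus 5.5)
We follow the Aizenman--Grimmett enhancement scheme, adapted to a periodic, non-translation-invariant set of enhanced edges. First we reduce to one parameter: by monotonicity of percolation in the edge probabilities (standard coupling), it suffices to find $a_0<p_c(d)$ such that $\mathrm{Perco}(d,a_0,b_0)$ percolates. For $s \in [0,1]$, consider the family $\mathrm{Perco}(d,a,sb_0)$. Since $\vec{\E}_d^s$ (as a set of unoriented edges) is invariant under translations by $4\Z^d$, all the models considered are $4\Z^d$-invariant. This periodicity is the only structure needed to run the Aizenman--Grimmett argument on the box $\Lambda_n=[-n,n]^d$.

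We work with $\theta_n(a,b)=\P_{a,b}(\mathbf 0 \leftrightarrow \partial\Lambda_n)$, a polynomial in $(a,b)$, and use Russo's formula for its two partial derivatives. A non-special edge $e$ that is pivotal contributes to $\partial_a\theta_n$, and a special edge that is pivotal contributes to $\partial_b\theta_n$ (and also to $\partial_a$). The key comparison to establish is
\[
\partial_b \theta_n(a,b) \ \ge\ \delta(a,b)\, \partial_a \theta_n(a,b)
\]
with $\delta$ bounded below by a positive constant, uniformly in $n$, on compact sets with $a,b$ bounded away from $0$ and $a+b$ bounded away from $1$. To prove it, fix an edge $e$ that is pivotal for $\{\mathbf 0\leftrightarrow\partial\Lambda_n\}$. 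Within the ball of radius $R=O(1)$ (say $R=4\sqrt d$) around $e$ there is a special edge $f$. Modifying the configuration inside this ball transforms a configuration where $e$ is pivotal into one where $f$ is pivotal: close or open finitely many edges so that the path through $e$ is rerouted through $f$, with $f$ the only bridge. This is the usual local surgery of Aizenman--Grimmett. The surgery costs a factor $(\min(a,b,1-a-b))^{C(d)}$ and the map is at most $C'(d)$-to-one, which gives the inequality after summing over $e$. Edges $e$ near $\partial\Lambda_n$ or near the origin need the standard boundary care, handled as in \cite{AizenmanGrimmett91}, for instance by working with $\theta_n$ at slightly modified boxes or by noting these contributions are controlled.

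With the differential inequality in hand, we integrate along curves in the $(a,b)$ plane. Starting from $(p_c(d)+\eta, 0)$ and moving in direction $(-1,\delta')$ for $\delta'<\delta$ uniformly small, $\theta_n$ is nondecreasing. Hence $\theta_n(a_0,b_0) \ge \theta_n(p_c+\eta,0)$ for some $a_0=p_c+\eta-b_0/\delta'<p_c$, after choosing $\eta$ small and, if necessary, reducing the movement in $b$ to exactly $b_0$ by monotonicity in $b$. Letting $n\to\infty$ gives $\theta(a_0,b_0)\ge\theta(p_c+\eta,0)>0$. The uniformity of $\delta$ over the relevant compact region (which needs $b_0<1-p_c(d)$ so that $a+b<1$ stays away from $1$) is what makes $a_0$ depend only on $b_0$.

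The main obstacle is the local surgery step. One must check that pivotality for a general edge $e$ can be transferred to a nearby special edge with bounded cost and bounded multiplicity, uniformly in $n$ and in the configuration. Note that we require pivotality of $f$ itself, not of $e$, so the special edge must actually carry the connection. We would follow Aizenman--Grimmett's construction essentially verbatim. The only adaptation is that the target edge is chosen among the special edges in a fixed periodic pattern rather than at $e$ itself, which only affects constants through $R$.
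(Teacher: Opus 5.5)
Your route is the one the paper intends. The paper gives no argument beyond invoking a straightforward adaptation of Aizenman--Grimmett. Your ingredients are the right ones: the reduction to the single point $(a_0,b_0)$ by monotonicity, the Russo-formula expressions for $\partial_a\theta_n$ and $\partial_b\theta_n$, and the target inequality $\partial_b\theta_n\ge\delta\,\partial_a\theta_n$ uniformly in $n$. Your proposal defers the local surgery to Aizenman--Grimmett, exactly as the paper does.

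There is, however, a concrete error in the integration step. By Russo, $\partial_a\theta_n=\sum_e\P(e\text{ pivotal})$ over all edges of $\Lambda_n$, and $\partial_b\theta_n$ is the same sum restricted to special edges. So $\partial_b\theta_n\le\partial_a\theta_n$ always, hence $\delta\le 1$. Along your direction $(-1,\delta')$ with $\delta'<\delta$,
\[
\frac{d}{dt}\,\theta_n(a-t,\,b+\delta' t)=-\partial_a\theta_n+\delta'\,\partial_b\theta_n\le-(1-\delta')\,\partial_a\theta_n\le 0 .
\]
So $\theta_n$ is nonincreasing along this direction, not nondecreasing. Consistently, your endpoint $a_0=p_c+\eta-b_0/\delta'$ is negative as soon as $\delta'<b_0/(p_c+\eta)$. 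You must move in direction $(-\delta',1)$ instead:
\[
\frac{d}{dt}\,\theta_n(a-\delta' t,\,b+t)=\partial_b\theta_n-\delta'\,\partial_a\theta_n\ge(\delta-\delta')\,\partial_a\theta_n\ge 0 .
\]
This gives $\theta_n(p_c+\eta-\delta' b_0,\,b_0)\ge\theta_n(p_c+\eta,\,0)$, and $a_0=p_c+\eta-\delta' b_0<p_c$ once $\eta<\delta' b_0$.

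Two smaller points concern the constants.

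First, your cost factor $\min(a,b,1-a-b)^{C(d)}$ should not contain $b$. The surgery only prescribes edge states whose probabilities are among $a,1-a,a+b,1-a-b$, all at least $\min(a,1-a-b)$. The Russo derivative in $b$ carries no factor $b$ either. So $\delta$ is uniform down to $b=0$, which you need because your segment starts at $b=0$. Alternatively, start at $(p_c+\eta,\beta)$ with small $\beta>0$ and use monotonicity in $b$.

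Second, fix the region before choosing $\eta$, so the choice is not circular. Take
\[
K=\{(a,b):\ a\ge p_c/2,\ 0\le b\le b_0,\ a+b\le 1-\kappa\},\qquad \kappa=(1-p_c-b_0)/2>0 .
\]
This is where $b_0<1-p_c(d)$ is used. Let $\delta=\delta(K)$, then take $\delta'=\min(\delta,\,p_c/(2b_0))$ and $\eta<\min(\delta' b_0,\,\kappa)$. With these choices the whole segment from $(p_c+\eta,0)$ to $(a_0,b_0)$ lies in $K$.
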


\medskip
We can now conclude the proof of Theorem~\ref{TH-survieSC}. Fix $d \ge 2$, take $r$ such that 
\[
0 < r < {\frac1{2^{2d-1}}}\frac{p_c(d)}{1+(2d-2)(1-p_c(d))}
\]
and fix $b_0=\min\left(\frac{1-p_c(d)}2, r(1-p_c(d))\right)$, so  that $b_0<1-p_c$. \\
Take $a_0<p_c(d)$ associated to $b_0$ by Lemma \ref{Enhancement2}. By continuity, we can then choose $p \in (a_0,p_c(d))$, close enough to $p_c(d)$, such that
\[
r<{\frac1{2^{2d-1}}} \frac{p}{1+(2d-2)(1-p)}.
\]
Fix $v>0$. By Lemma \ref{LEM:zeta}, we choose $\lambda$ large enough such that for every $(y,z) \in \vec{\E}_d^s$, $\P(\underline \zeta_{(y,z)}=1) \ge r$, so that
$$
 b  =\P(\underline \epsilon_{(y,z)}=2)=
\P(\underline \zeta_{(y,z)}=1)\P(\omega_{\{y,z\}}=0) \ge r(1-p) \ge r(1-p_c(d)) \ge b_0 .
$$
Finally, choose $q \in (0,1)$ such that $p q \ge a_0$. 
With \eqref{EQ:couplingTperco2}, by increasing $\lambda$ if necessary,  for every $(y,z) \in \vec{\E}_d$, $\P(\underline \xi_{(y,z)}=1) = q$, so that
$$ a= \P(\underline \epsilon_{(y,z)}=1)=
\P(\underline \xi_{(y,z)}=1)\P(\omega_{\{y,z\}}=1) = pq \ge a_0.$$

With Algorithm \ref{ALG-CPDE2-couplagePerco}, we have constructed the random set $\underline{S}_T$ which is the connected component of the origin in $\mathrm{Perco}(d,a,b)$. By Lemma \ref{Enhancement2}, this percolation is supercritical, so with positive probability, $\underline S_T$ is infinite. The coupling inclusion~\eqref{EQ:incluBis} ensures that $S_T$ is also infinite with positive probability. Since, by construction of Algorithm \ref{ALG-CPDE2}, the set $S_T$ is the set of sites infected once in the process $\underline \eta$, this process lives forever with positive probability. By stochastic comparison, the $\text{CPDE}(p,v,\lambda)$ lives forever with positive probability. This ends the proof of Theorem~\ref{TH-survieSC}.

\bibliographystyle{alpha}

\end{document}